\documentclass[10pt]{amsart}
\usepackage{graphicx, color}
\usepackage{amssymb}
\usepackage{amsmath}
\usepackage[T1]{fontenc}
\usepackage[displaymath, mathlines, running]{lineno}
\newtheorem{theorem}{Theorem}[section]

\newtheorem{corollary}[theorem]{Corollary}
\newtheorem{proposition}[theorem]{Proposition}

\newenvironment{proof of main thm}{{\bf Proof of Theorem \ref{main thm}.}}{\hfill\fbox{}\par\vspace{.2cm}}
\numberwithin{equation}{section}

\def\charf {\mbox{{\text 1}\kern-.24em {\text l}}}

\def\bea{\begin{eqnarray*}}
\def\eea{\end{eqnarray*}}
\def\be{\begin{eqnarray}}
\def\ee{\end{eqnarray}}

\begin{document}

\title[Rigidity of volume-minimizing free boundary hypersurface]{Rigidity of volume-minimizing free boundary hypersurfaces in high dimensional Riemannian manifolds}

%    Remove any unused author tags.

\author{Sanghun Lee}
\address{Department of Mathematics and Institute of Mathematical Science, Pusan National University, Busan 46241, Republic of Korea}
\email{kazauye@pusan.ac.kr}

%    author two information

%\thanks{The author was supported by the National Research Foundation of Korea (NRF) grant funded by the Korea government (No. RS-2023-00214167).}
\subjclass[2010]{53A10; 53C24}

\keywords{Rigidity; Free boundary; Scalar curvature}

\date{}

\dedicatory{}

\begin{abstract}
In this paper, we establish a volume estimate and an associated rigidity result for volume-minimizing free boundary hypersurfaces in an $(n+1)$-dimensional Riemannian manifold $M$. First, we derive a volume estimate for stable minimal free boundary hypersurfaces in terms of the scalar curvature of $M$ and the second fundamental form of the boundary $\partial M$. We then prove rigidity results of $M$ in the corresponding equality cases.
\end{abstract}

\maketitle
\section{Introduction}

The interaction between scalar curvature and the geometry of minimal hypersurfaces has played an important role in Riemannian geometry. A classical result of Schoen and Yau \cite{SC3} shows that, in a three-dimensional Riemannian manifold with positive scalar curvature, a closed, two-sided, stable minimal surface must have genus zero. Under the weaker assumption of non-negative scalar curvature, Fischer-Colbrie and Schoen \cite{FC} proved that such a stable minimal surface can only have genus zero or one. Moreover, in the genus one case, the surface is totally geodesic and the normal Ricci curvature of the ambient manifold vanishes along the surface. Motivated by these results, Cai and Galloway \cite{CG} proved that if such a minimal torus is additionally area-minimizing, then a neighborhood of the torus splits isometrically as a product. Analogous results under positive and negative scalar curvature assumptions were established by Bray, Brendle, and Neves \cite{BBN}, and Nunes \cite{NU}, respectively. Micallef and Moraru \cite{MM} later gave a unified approach to these rigidity results. Related high dimensional results can be found in \cite{BCBS, CG2, CG3, DE, ME, MOT, MO, ZH}. In particular, rigidity results for volume-minimizing Einstein hypersurfaces in high dimensional Riemannian manifolds with positive scalar curvature were obtained in \cite{BCBS, DE}.

It is natural to consider analogous questions for Riemannian manifolds with boundary. In this setting, free boundary minimal hypersurfaces arise as critical points of the volume functional among variations preserving the ambient boundary and meet the boundary of the ambient manifold orthogonally. Rigidity for minimal free boundary hypersurfaces have been studied in several settings \cite{AM, BAC, BC, CR}. In particular, Ambrozio \cite{AM} established rigidity results for area-minimizing free boundary surfaces in three-dimensional Riemannian manifolds with mean convex boundary. Barros and Cruz \cite{BC} obtained volume estimates and rigidity results for volume-minimizing free boundary hypersurfaces with non-positive Yamabe constant in high dimensional Riemannian manifold with mean convex boundary.

The purpose of this paper is to study a positive scalar curvature analogue of these rigidity results for high dimensional free boundary hypersurfaces. More precisely, we consider a compact volume-minimizing free boundary hypersurface $\Sigma$ in an
$(n+1)$-dimensional Riemannian manifold $M$ with weakly convex boundary, under the assumptions $Ric^{M}\geq 0$ and $\inf_M R^M>0$. We further assume that the induced metric on $\Sigma$ is Einstein, namely, $Ric^{\Sigma} = \frac{R^{\Sigma}}{n}g_{\Sigma}$. Under these assumptions, we establish an upper bound for the volume of $\Sigma$ and prove a rigidity result in the equality case.

An important ingredient in our result is the recent spectral volume comparison theorem of Li \cite{LI}. Li established a Bishop-Gromov volume comparison for compact Riemannian manifolds with weakly convex boundary under a spectral lower bound involving the smallest eigenvalue of the Ricci curvature. We recall the version that will be used in this paper.

\begin{theorem} [\cite{LI}] \label{vol com thm}
Let $\Sigma$ be an $n$-dimensional compact connected smooth manifold ($n \geq 3$) with weakly convex boundary $\partial\Sigma$, and let $0 \leq \gamma \leq \frac{n-1}{n-2}$, $\lambda > 0$. Assume that there exists a positive smooth function $\Phi \in C^{\infty}(\Sigma)$ satisfying
\begin{align*}
\begin{cases}
\Phi Ric^{\Sigma}_{-} - \gamma\Delta_{\Sigma}\Phi \geq (n-1)\lambda \Phi \,\, {\rm in} \,\, \Sigma, \\
\frac{\partial \Phi}{\partial\nu} = 0, \,\, {\rm on} \,\, \partial\Sigma,
\end{cases}
\end{align*}
where $Ric^{\Sigma}_{-}$ denotes the smallest eigenvalue of the Ricci curvature of $\Sigma$ and $\nu$ is the unit conormal vector of $\partial\Sigma$ in $\Sigma$. If $\Phi$ attains its minimum on $\partial\Sigma$, then
\begin{align} \label{vol com ine}
{\rm Vol}(\Sigma) \leq \lambda^{-\frac{n}{2}}{\rm Vol}(\mathbb{S}^{n}_{+}).
\end{align}
Moreover, if equality holds in (\ref{vol com ine}), then $\Sigma$ is isometric to the round hemisphere of radius $\lambda^{-\frac{1}{2}}$. 
\end{theorem}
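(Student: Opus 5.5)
The plan is to reduce Theorem \ref{vol com thm} to a Bishop--Gromov comparison along the level sets of a distance function. The positive spectral function $\Phi$ is used as a conformal or warping weight, in the spirit of the spectral Bonnet--Myers arguments of Antonelli--Xu and the $\mu$-bubble/weighted-distance approach.

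\textbf{Step 1: the weighted distance.} Let $p$ be a point on $\partial\Sigma$ where $\Phi$ attains its minimum. Consider the weighted distance
\begin{align*}
d_\Phi(x) = \inf_{\sigma} \int_\sigma \Phi^{-\gamma/(n-1)}\, ds
\end{align*}
from $p$ (or some closely related exponent). The weak convexity of $\partial\Sigma$ ensures that minimizing curves from $p$ stay away from the boundary except possibly at the endpoints, so geodesics do not bend along $\partial\Sigma$. The Neumann condition $\partial_\nu \Phi = 0$ then ensures that no boundary terms appear when integrating by parts.

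\textbf{Step 2: Riccati inequality along minimizing curves.} Along each minimizing $\Phi$-geodesic, I will derive a Riccati-type inequality for the weighted mean curvature of the level sets of $d_\Phi$. The second variation of the weighted length produces terms of the form $\Phi^{-1}\Delta\Phi$ and $|\nabla\Phi|^2/\Phi^2$. The hypothesis $\Phi Ric_- -\gamma\Delta\Phi\geq (n-1)\lambda\Phi$ is used to trade $Ric(\dot\sigma,\dot\sigma)$ for $(n-1)\lambda + \gamma\Phi^{-1}\Delta\Phi$. The restriction $\gamma\le \frac{n-1}{n-2}$ is exactly what is needed to absorb the gradient cross-terms by a Cauchy--Schwarz inequality, in the form $(a+b)^2\ge \frac{a^2}{1+\epsilon}-\frac{b^2}{\epsilon}$. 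This yields a mean-curvature comparison $H\le (n-1)\sqrt{\lambda}\cot(\sqrt{\lambda}\,t)$ in a suitable averaged or integrated form.

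\textbf{Step 3: integrating to a volume bound.} Integrating the comparison over $\Sigma$ gives the volume ratio monotonicity. Because $p\in\partial\Sigma$ and the boundary is weakly convex, only a half-space's worth of directions emanates from $p$. The comparison model is therefore the hemisphere $\mathbb{S}^n_+$ rather than the whole sphere, and we obtain $\mathrm{Vol}(\Sigma)\le \lambda^{-n/2}\mathrm{Vol}(\mathbb{S}^n_+)$. The fact that $\Phi$ attains its minimum at $p$ is used to normalize: the weight is largest in the metric sense at $p$, which makes the weighted ball comparison dominate the actual volume.

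\textbf{Step 4: rigidity.} In the equality case, every inequality in the chain must be an equality. This forces $\nabla\Phi\equiv 0$, so $\Phi$ is constant, and then $Ric\ge (n-1)\lambda$ holds with equality in the radial directions. The second fundamental form of $\partial\Sigma$ vanishes along the relevant geodesics, and the standard Bishop--Gromov equality argument gives the hemisphere of radius $\lambda^{-1/2}$.

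\textbf{Main obstacle.} I expect the hardest step to be Step 2: handling the Laplacian of $\Phi$ along geodesics (only an integral, not a pointwise, control is available) together with cut-locus regularity. My first attempt would be to follow Antonelli--Xu's averaging over geodesic spheres, treating the cut locus in the barrier sense. If that fails, I would instead use a $\mu$-bubble construction with weight $\Phi^\gamma$.
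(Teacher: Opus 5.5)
The paper does not prove this theorem. It is imported from Li and applied only with $\Phi\equiv 1$, $\gamma=0$ in Proposition~\ref{prop 3.1}. In that case your outline reduces to the classical Bishop--Gromov argument from a boundary point (Riccati comparison over the inward half-space of directions), and it does suffice. One correction: weak convexity does not keep minimizing curves off $\partial\Sigma$. In the model hemisphere the boundary is totally geodesic, and minimizing arcs between boundary points lie inside it. What convexity gives is that minimizing curves are honest geodesics.

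For the statement in its stated generality, however, there is a genuine gap, and it sits exactly at Step 2. Along a minimizing geodesic $\sigma$, with $m=\Delta r$ and $S_r$ the distance sphere,
\[
m'+\frac{m^{2}}{n-1}\le -Ric(\dot\sigma,\dot\sigma)\le -(n-1)\lambda-\gamma\frac{\Delta\Phi}{\Phi},\qquad \Delta\Phi=(\Phi\circ\sigma)''+m(\Phi\circ\sigma)'+\Delta_{S_r}\Phi .
\]
The term $\Delta_{S_r}\Phi$ involves derivatives of $\Phi$ transverse to $\sigma$, has no sign, and is invisible on the curve. So no Cauchy--Schwarz along $\sigma$ absorbs it, and no pointwise comparison $m\le(n-1)\sqrt{\lambda}\cot(\sqrt{\lambda}\,r)$ follows. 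The conformal weight does not help. For $\tilde g=e^{2w}g$, the term $-(\Delta w)\,g$ in $\widetilde{Ric}$ with your $w=-\frac{\gamma}{n-1}\log\Phi$ raises the unsigned term to $\frac{n\gamma}{n-1}\Phi^{-1}\Delta\Phi$. Only $w=\gamma\log\Phi$ cancels it, and then a transverse gradient term $\gamma\bigl(1-(n-2)\gamma\bigr)|\nabla\log\Phi|^{2}$ remains, which is negative once $\gamma>\frac{1}{n-2}$. Averaging over distance spheres fails even for $\Phi\equiv 1$: $\frac{d^{2}}{dr^{2}}|S_r|$ contains $\frac{n-2}{n-1}\int_{S_r}m^{2}$ with a positive sign. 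Cauchy--Schwarz cannot bound this by $(\int_{S_r}m)^{2}/|S_r|$, because $m$ is not constant on $S_r$. Consequently, the roles you assign to $\gamma\le\frac{n-1}{n-2}$ and to the boundary minimum of $\Phi$ are asserted rather than derived.

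The missing idea is Bray's isoperimetric-profile method with a weighted perimeter. This is the method behind Antonelli--Xu's closed-manifold theorem, of which Li's result is the weakly convex analogue. Set $I(v)=\inf\{\int_{\partial E\cap\mathrm{int}\,\Sigma}\Phi^{\gamma}: |E|=v\}$.
\begin{itemize}
\item A minimizer meets $\partial\Sigma$ orthogonally and satisfies $\Phi^{\gamma}H+\gamma\Phi^{\gamma-1}\partial_N\Phi=c$, a constant. This constancy replaces the missing constancy of $m$.
\item In the second variation (e.g.\ with normal speed $\Phi^{-\gamma}$), the term $\Delta_{\partial E}\Phi$ is integrated by parts over the hypersurface.
\item The free-boundary terms on $\partial E\cap\partial\Sigma$ are controlled by $\partial_\nu\Phi=0$ and $h^{\partial\Sigma}\ge 0$.
\item The threshold $\gamma\le\frac{n-1}{n-2}$ shows up as the nonpositivity of the coefficient $\gamma(\gamma-1)-\frac{\gamma^{2}}{n-1}$ of $(\partial_N\log\Phi)^{2}$, after inserting the first-variation identity into $|A|^{2}\ge H^{2}/(n-1)$.
\item The hypothesis that $\Phi$ attains its minimum on $\partial\Sigma$ supplies the initial condition. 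Normalizing $\min\Phi=1$, small half-balls at a boundary minimum point give $I(v)\le I_{\mathbb{R}^{n}_{+}}(v)(1+o(1))$. Without this hypothesis, the small-volume constant in front of $I_{\mathbb{R}^{n}_{+}}$ can exceed $1$ (interior balls only give the factor $2^{1/n}$).
\end{itemize}
The last point is what makes the ODE comparison land on the hemisphere of radius $\lambda^{-1/2}$ rather than the sphere. Positivity of $I$ on $(0,\mathrm{Vol}(\Sigma))$ then forces the volume bound, and equality forces $\Phi$ to be constant.
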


Here and throughout the paper, \textit{weakly convex boundary} means that the second fundamental form of the boundary is nonnegative.

Combining Theorem \ref{vol com thm} with the stability inequality for minimal free boundary hypersurfaces, we obtain the following volume estimate and rigidity result.

\begin{theorem} \label{main thm}
Let $M$ be an $(n+1)$-dimensional Riemannian manifold $(n \geq 3)$ with weakly convex boundary $\partial M$. Assume that $Ric^{M} \geq 0$ and $\inf_{M}R^{M} > 0$. Let $\Sigma$ be a compact connected two-sided, properly embedded, volume-minimizing free boundary hypersurface satisfying $Ric^{\Sigma} = \frac{R^{\Sigma}}{n}g_{\Sigma}$. Then
\begin{align} \label{ine vol}
{\rm Vol}(\Sigma)^{\frac{2}{n}}\inf_{M}R^{M} \leq n(n-1){\rm Vol}(\mathbb{S}^{n}_{+})^{\frac{2}{n}},
\end{align}
where $\mathbb{S}^{n}_{+}$ denotes the round hemisphere. Moreover, if equality holds in (\ref{ine vol}), then, up to scaling, $\Sigma$ is isometric to $\mathbb{S}^{n}_{+}$ and a neighborhood of $\Sigma$ in $M$ isometric to $\mathbb{S}^{n}_{+} \times (-\epsilon,\epsilon)$ with the product metric.
\end{theorem}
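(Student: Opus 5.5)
We plan to prove the inequality by taking the constant test function in the stability inequality and then applying Theorem \ref{vol com thm} with $\Phi\equiv 1$. Let $N$ be the unit normal of $\Sigma$, $A$ its second fundamental form and $h$ the second fundamental form of $\partial M$. Since $\Sigma$ is volume-minimizing, it is a stable minimal free boundary hypersurface. Its stability inequality reads
\begin{align*}
\int_\Sigma |\nabla f|^2 - \big(Ric^M(N,N)+|A|^2\big)f^2 \,-\int_{\partial\Sigma} h(N,N) f^2 \geq 0 .
\end{align*}
By the Gauss equation and minimality, $Ric^M(N,N)+|A|^2=\frac12\big(R^M-R^\Sigma+|A|^2\big)$.

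Since $n\geq 3$ and $g_\Sigma$ is Einstein, Schur's lemma shows that $R^\Sigma$ is constant. Taking $f=1$ gives
\begin{align*}
R^\Sigma{\rm Vol}(\Sigma)\;\geq\;\int_\Sigma R^M+\int_\Sigma|A|^2+2\int_{\partial\Sigma}h(N,N)\;\geq\;\inf_M R^M\,{\rm Vol}(\Sigma).
\end{align*}
Here the last step uses that $h\geq 0$. Hence $R^\Sigma\geq \inf_M R^M>0$, and therefore $Ric^\Sigma=\frac{R^\Sigma}{n}g_\Sigma$ is positive.

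Next we check the boundary convexity of $\Sigma$. Because $\Sigma$ meets $\partial M$ orthogonally, the outward conormal $\nu$ of $\partial\Sigma$ in $\Sigma$ equals the outward normal of $\partial M$. Consequently, the second fundamental form of $\partial\Sigma$ in $\Sigma$ is the restriction of $h$ to $T\partial\Sigma$, so $\partial\Sigma$ is weakly convex. We then apply Theorem \ref{vol com thm} with $\Phi\equiv1$, $\gamma=0$ and $\lambda=\frac{R^\Sigma}{n(n-1)}$. The Neumann condition holds trivially, and the minimum of $\Phi$ is attained on $\partial\Sigma$. This yields
\begin{align*}
{\rm Vol}(\Sigma)\leq \Big(\tfrac{n(n-1)}{R^\Sigma}\Big)^{n/2}{\rm Vol}(\mathbb S^n_+)\leq \Big(\tfrac{n(n-1)}{\inf_M R^M}\Big)^{n/2}{\rm Vol}(\mathbb S^n_+),
\end{align*}
which is (\ref{ine vol}).

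Now suppose equality holds in (\ref{ine vol}). Then every inequality above is an equality. In particular $R^\Sigma=\inf_M R^M$, $A\equiv0$, $h(N,N)=0$ on $\partial\Sigma$, and equality holds in Theorem \ref{vol com thm}, so $\Sigma$ is a round hemisphere (up to scaling). Moreover, $f=1$ realizes equality in the stability form, so $f=1$ minimizes the associated quadratic form; it follows that $1$ lies in the kernel of the Jacobi operator with the Robin condition $\partial_\nu f=h(N,N)f$. This gives $Ric^M(N,N)+|A|^2=0$, hence $Ric^M(N,N)=0$ along $\Sigma$.

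For the splitting, we follow the Cai--Galloway/Ambrozio scheme. Since $Ric^M(N,N)=0$, $A=0$ and $h(N,N)=0$, the Jacobi operator is $-\Delta_\Sigma$ with Neumann condition, whose kernel consists of the constants. The implicit function theorem then produces a foliation $\{\Sigma_t\}_{|t|<\epsilon}$ of a neighborhood of $\Sigma$ by free boundary hypersurfaces of constant mean curvature $H(t)$. Each leaf is a normal graph over $\Sigma$ with lapse $\phi_t>0$ and $\phi_0\equiv1$.

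The key computation is $H'(t)=-\Delta\phi_t-(Ric^M(N,N)+|A_t|^2)\phi_t$ with $\partial_\nu\phi_t=h(N_t,N_t)\phi_t$. Dividing by $\phi_t$ and integrating gives
\begin{align*}
H'(t)\int_{\Sigma_t}\phi_t^{-1}=-\int_{\Sigma_t}\frac{|\nabla\phi_t|^2}{\phi_t^2}-\int_{\Sigma_t}\big(Ric^M(N_t,N_t)+|A_t|^2\big)-\int_{\partial\Sigma_t}h(N_t,N_t)\leq 0 .
\end{align*}
Here we use $Ric^M\geq0$, which is where this hypothesis enters, and $h\geq0$. So $H$ is nonincreasing with $H(0)=0$. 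By the first variation formula, ${\rm Vol}(\Sigma_t)\leq{\rm Vol}(\Sigma)$ for all small $|t|$. Volume minimality then forces equality, so $H\equiv0$. Hence each $\phi_t$ is constant and $A_t\equiv0$, and the flow of $N$ gives the isometry with $\mathbb S^n_+\times(-\epsilon,\epsilon)$.

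I expect the main obstacle to be this last step. It requires setting up the free boundary CMC foliation with the correct boundary condition, and justifying that ${\rm Vol}(\Sigma_t)\le{\rm Vol}(\Sigma)$ combined with minimality gives $H\equiv0$.
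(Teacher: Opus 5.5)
Your proposal is correct. The foliation and splitting part follows the paper's argument essentially verbatim: the same lapse equation with Robin boundary condition, the same integration against the inverse lapse, the same use of $Ric^{M}\geq 0$ and weak convexity to get $H'(t)\leq 0$, and the same volume comparison.

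The genuine difference is in how you obtain the inequality.

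\emph{Paper's route.} It plugs $\varphi\equiv 1$ into the stability inequality and uses $Ric^{M}\geq 0$ and $h^{\partial M}\geq 0$. This forces $Ric^{M}(N,N)=0$, $h^{\Sigma}=0$ and $h^{\partial M}(N,N)=0$ immediately. The Gauss equation then gives $R^{\Sigma}=R^{M}\geq \inf_M R^M$ pointwise on $\Sigma$, and Li's theorem is applied with $\lambda=\inf_M R^M/(n(n-1))$.

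\emph{Your route.} You rewrite the stability integrand through the Gauss equation. You then use Schur's lemma, which is where $n\geq 3$ enters for you, to make $R^{\Sigma}$ constant. The integrated inequality therefore already gives the pointwise bound $R^{\Sigma}\geq \inf_M R^M$, and you apply Li's theorem with $\lambda=R^{\Sigma}/(n(n-1))$.

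\emph{What each buys.} Your route proves the volume estimate without $Ric^{M}\geq 0$ at all, using only positive scalar curvature and weak convexity, in the Bray--Brendle--Neves spirit. It recovers the infinitesimal rigidity only from the equality case: $A=0$, $h(N,N)=0$, and $R^{M}=R^{\Sigma}=\inf_M R^M$ on $\Sigma$. From these, $Ric^{M}(N,N)=0$ can be read off directly from the Gauss identity, without passing through the kernel of the Jacobi operator. In your version, $Ric^{M}\geq 0$ is needed only for the monotonicity $H'(t)\leq 0$ along the foliation.

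The paper's route shows instead that under $Ric^{M}\geq 0$ these infinitesimal conditions hold for every stable such $\Sigma$. Its foliation argument therefore gives the local product structure near any volume-minimizing $\Sigma$, and equality is used only to identify $\Sigma$ as a hemisphere.
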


\section{Preliminaries}

In this section, we recall some basic definitions and facts concerning free boundary hypersurfaces.

Let $M$ be an $(n+1)$-dimensional Riemannian manifold with non-empty boundary $\partial M$, and let $\Sigma \subset M$ be a compact two-sided, properly embedded hypersurface with boundary satisfying $\Sigma \cap \partial M = \partial\Sigma$. We denote by $N$ a unit normal vector field along $\Sigma$, and by $\nu$ the outward unit conormal to $\partial\Sigma$ in $\Sigma$. Let $\bar{N}$ denote the outward unit normal vector to $\partial M$. We say that $\Sigma$ is a \textit{free boundary hypersurface} if it meets $\partial M$ orthogonally along $\partial\Sigma$. After choosing the orientations consistently, $\nu = \bar{N}$ along $\partial\Sigma$. Let $\Psi: \Sigma \times (-\epsilon,\epsilon) \rightarrow M$ be a smooth \textit{variation} of $\Sigma$, with $\Psi(x,0) = x$, such that $\Psi(\partial\Sigma, t) \subset \partial M$ for every $t \in (-\epsilon,\epsilon)$. We write $\Psi(\Sigma,t) := \Sigma_{t}$ and denote the variational vector field by $X = \left.\frac{\partial\Psi}{\partial t}\right|_{t=0}$. Let $\varphi=\langle X,N\rangle$ be the normal component of the variational vector field. The volume functional associated with the variation is defined by
\begin{align*}
V(t)={\rm Vol}(\Sigma_t) = \int_{\Sigma}dv_{t},
\end{align*}
where $dv_{t}$ denotes the volume element induced on $\Sigma_{t}$. Then the first variation formula is 
\begin{align} \label{first vari} 
V'(0) = \int_{\Sigma} H\varphi \, dv + \int_{\partial\Sigma}\langle X, \nu \rangle \, ds, 
\end{align} 
where $H$ denotes the mean curvature of $\Sigma$. If $\Sigma$ is a free boundary hypersurface, then $\nu=\bar{N}$ along $\partial\Sigma$, while $X\in T(\partial M)$. Hence $\langle X,\nu\rangle=0$ along $\partial\Sigma$. Consequently, $\Sigma$ is a \emph{minimal free boundary hypersurface} if and only if $V'(0) = 0$ for every admissible variation. We next recall the second variation formula. Let $h^{\Sigma}$ denote the second fundamental form of $\Sigma$ in $M$, and let
$h^{\partial M}$ denote the second fundamental form of $\partial M$ with respect to $\bar{N}$. For a minimal free boundary hypersurface, the second variation is given by
\begin{align} \label{second vari}
\begin{aligned}
V''(0) &= \int_{\Sigma} \left( \vert\nabla^{\Sigma}\varphi\vert^{2} - \left(Ric^{M}(N,N) + \vert h^{\Sigma} \vert ^{2} \right)\varphi^{2} \right)\, dv \\
&\quad - \int_{\partial\Sigma} h^{\partial M}(N,N)\varphi^{2}\,ds.
\end{aligned}
\end{align}
A minimal free boundary hypersurface $\Sigma$ is \textit{stable} if $V''(0) \geq 0$ for every smooth admissible variation. By the second variation formula, this is equivalent to
\begin{align*}
\begin{aligned}
0 &\leq \int_{\Sigma} \left(\vert \nabla^{\Sigma}\varphi \vert^{2} - \left(Ric^{M}(N,N) + \vert h^{\Sigma} \vert^{2}\right)\varphi^{2}\right) \, dv \\
&\quad -\int_{\partial\Sigma} h^{\partial M}(N,N)\varphi^{2} \, ds
\end{aligned}
\end{align*}
for every $\varphi \in C^{\infty}(\Sigma)$.

\section{Volume estimates and CMC foliations}

In this section, we estimate the volume of stable minimal free boundary hypersurface in $(n+1)$-dimensional Riemannian manifold with weakly convex boundary. Moreover, we construct the CMC (constant mean curvature) foliations in a neighborhood of stable minimal free boundary hypersurface.

\begin{proposition} \label{prop 3.1}
Let $M$ be an $(n+1)$-dimensional Riemannian manifold with weakly convex boundary $\partial M$. Assume that $Ric^{M} \geq 0$ and $\inf_{M}R^{M} > 0$. Let $\Sigma$ be a compact connected two-sided, properly embedded, stable minimal free boundary hypersurface satisfying $Ric^{\Sigma} = \frac{R^{\Sigma}}{n}g_{\Sigma}$. Then
\begin{align} \label{ine 3.1}
{\rm Vol}(\Sigma)^{\frac{2}{n}}\inf_{M}R^{M} \leq n(n-1){\rm Vol}(\mathbb{S}^{n}_{+})^{\frac{2}{n}},
\end{align}
where $\mathbb{S}^{n}_{+}$ denotes the unit round hemisphere. Moreover, if equality holds in (\ref{ine 3.1}), then, up to scaling, $\Sigma$ is isometric to $\mathbb{S}^{n}_{+}$.
\end{proposition}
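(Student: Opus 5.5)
The plan is to apply Theorem \ref{vol com thm} to $\Sigma$ with $\Phi$ the first eigenfunction of the Jacobi (stability) operator, and $\gamma$, $\lambda$ chosen so that the eigenfunction inequality follows from the Gauss equation.

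\textbf{Step 1 (eigenfunction).} Let $L = \Delta_\Sigma + Ric^M(N,N) + |h^\Sigma|^2$, with Robin boundary condition $\partial_\nu \varphi = h^{\partial M}(N,N)\varphi$. Stability says the first eigenvalue $\mu_1$ of $-L$ is nonnegative, and the first eigenfunction $\Phi$ can be taken positive. This gives
\[
-\Delta_\Sigma\Phi = (Ric^M(N,N)+|h^\Sigma|^2+\mu_1)\Phi \ \text{in } \Sigma, \qquad \partial_\nu\Phi = h^{\partial M}(N,N)\Phi \ \text{on } \partial\Sigma.
\]
Theorem \ref{vol com thm}, however, requires the Neumann condition $\partial_\nu\Phi=0$ and that $\Phi$ attain its minimum on $\partial\Sigma$. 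This is the main obstacle. Weak convexity gives $\partial_\nu\Phi\ge 0$, but not zero. I would first try to replace the first eigenfunction by the first eigenfunction of the Neumann problem for $-\Delta_\Sigma - Ric^M(N,N)-|h^\Sigma|^2$. Its Rayleigh quotient dominates the stability quotient, because the dropped boundary term $-\int_{\partial\Sigma}h^{\partial M}(N,N)\varphi^2$ is $\le 0$. So stability still gives a nonnegative first Neumann eigenvalue $\mu_1$, with a positive eigenfunction satisfying $\partial_\nu\Phi=0$.

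For the minimum condition, note that $-\Delta\Phi = (\ldots)\Phi$ with a nonnegative coefficient, since $Ric^M\ge0$. So $\Phi$ is superharmonic, and by the strong maximum principle its minimum is attained on $\partial\Sigma$ (or $\Phi$ is constant). This is where $Ric^M\ge 0$ is used.

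\textbf{Step 2 (Gauss equation).} Since $H=0$, the traced Gauss equation gives
\[
Ric^M(N,N)+|h^\Sigma|^2 = \tfrac12\bigl(R^M - R^\Sigma + |h^\Sigma|^2\bigr) \ge \tfrac12(R^M - R^\Sigma).
\]
Since $\Sigma$ is Einstein, $Ric^\Sigma_- = R^\Sigma/n$. For $n\ge3$ the contracted Bianchi identity forces $R^\Sigma$ to be constant. With $\gamma = \frac{2}{n}$ (note $\frac2n\le\frac{n-1}{n-2}$ holds for $n\ge3$) we get
\[
\Phi Ric^\Sigma_- - \gamma\Delta_\Sigma\Phi \ge \tfrac{R^\Sigma}{n}\Phi + \tfrac{2}{n}\cdot\tfrac12(R^M - R^\Sigma)\Phi \ge \tfrac1n \inf_M R^M\,\Phi.
\]
So Theorem \ref{vol com thm} applies with $(n-1)\lambda = \frac{1}{n}\inf_M R^M$, that is, $\lambda = \frac{\inf R^M}{n(n-1)}$.

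\textbf{Step 3 (conclusion).} The theorem then gives ${\rm Vol}(\Sigma)\le \lambda^{-n/2}{\rm Vol}(\mathbb{S}^n_+)$. Raising to the power $2/n$ yields (\ref{ine 3.1}). In the equality case, the rigidity part of Theorem \ref{vol com thm} gives that $\Sigma$ is isometric to a hemisphere of radius $\lambda^{-1/2}$, hence to $\mathbb{S}^n_+$ up to scaling.

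The delicate points are the Neumann replacement in Step 1 and checking that $\Phi$ is smooth up to the boundary. If the Neumann replacement fails, I would instead work with the Robin eigenfunction and attempt a conformal modification near $\partial\Sigma$.
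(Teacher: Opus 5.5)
There is one missing step. Theorem \ref{vol com thm} requires $\partial\Sigma$ to be weakly convex \emph{as the boundary of $\Sigma$}, and you never check this. In Step 1 you list only the Neumann condition and the minimum condition as hypotheses to verify. Convexity of $\partial\Sigma$ does not follow from anything in your Steps 1--3. It comes from the free boundary condition together with the convexity of $\partial M$. Since $\Sigma$ meets $\partial M$ orthogonally, the outward conormal $\nu$ equals $\bar{N}$ along $\partial\Sigma$, so for $V,W\in T(\partial\Sigma)$
\[
h^{\partial\Sigma}(V,W)=\langle \nabla^{M}_{V}\nu,W\rangle=\langle \nabla^{M}_{V}\bar{N},W\rangle=h^{\partial M}(V,W)\ge 0 .
\]
This is (\ref{ine 3.2}) in the paper, and you need to add it before invoking the theorem.

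With that line added, your argument is correct, but it is a heavier route than the paper's. The paper plugs $\varphi\equiv 1$ into the stability inequality. Together with $Ric^{M}\ge 0$ and $h^{\partial M}\ge 0$, this forces $Ric^{M}(N,N)=0$, $h^{\Sigma}=0$ and $h^{\partial M}(N,N)=0$. The Gauss equation then gives $R^{M}=R^{\Sigma}$ on $\Sigma$, hence $Ric^{\Sigma}_{-}=R^{M}/n$ pointwise. So Theorem \ref{vol com thm} applies with $\Phi\equiv 1$ and $\gamma=0$, with no eigenvalue problem and no maximum principle.

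In fact your Neumann eigenfunction is forced to be constant. On one hand $\Delta_{\Sigma}\Phi=-(Ric^{M}(N,N)+|h^{\Sigma}|^{2}+\mu_{1})\Phi\le 0$. On the other hand $\int_{\Sigma}\Delta_{\Sigma}\Phi\,dv=\int_{\partial\Sigma}\partial_{\nu}\Phi\,ds=0$. Hence $Ric^{M}(N,N)+|h^{\Sigma}|^{2}+\mu_{1}\equiv 0$, which gives $\mu_{1}=0$, $Ric^{M}(N,N)=0$ and $h^{\Sigma}=0$. Then $\Delta_{\Sigma}\Phi=0$ with Neumann data, so $\Phi$ is constant. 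Under these hypotheses your choice $\gamma=2/n$ with a spectral $\Phi$ collapses to the paper's choice. The points you flag as delicate (regularity up to the boundary, the Robin fallback) never arise.

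What your formulation does show is that the estimate $\Phi Ric^{\Sigma}_{-}-\frac{2}{n}\Delta_{\Sigma}\Phi\ge \frac{R^{M}}{n}\Phi$ needs only stability, $h^{\partial M}(N,N)\ge 0$, the Einstein condition and the Gauss equation. The hypothesis $Ric^{M}\ge 0$ enters only through the requirement that $\Phi$ attain its minimum on $\partial\Sigma$. That is the point one would have to revisit to work with a scalar curvature assumption alone.
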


\begin{proof}
Since $\Sigma$ is a stable minimal free boundary hypersurface, we have
\begin{align*}
\begin{aligned}
0 \leq& \int_{\Sigma}\vert \nabla^{\Sigma}\varphi \vert^{2} dv - \int_{\Sigma}\varphi^{2}\left(Ric^{M}(N,N) + \vert h^{\Sigma} \vert^{2}\right) dv \\
&- \int_{\partial\Sigma}h^{\partial M}(N,N)\varphi^{2}ds
\end{aligned}
\end{align*}
for every $\varphi \in C^{\infty}(\Sigma)$. Taking $\varphi = 1$, we obtain
\begin{align*}
\begin{aligned}
0 \leq& -\int_{\Sigma}\left(Ric^{M}(N,N) + \vert h^{\Sigma} \vert^{2}\right)dv - \int_{\partial\Sigma} h^{\partial M}(N,N) ds \\
\leq& - \int_{\Sigma}\vert h^{\Sigma} \vert^{2} dv \leq 0,
\end{aligned}
\end{align*}
where we used $Ric^{M} \geq 0$ and the weakly convexity of $\partial M$. Hence, $Ric^{M}(N,N) = 0$ and $h^{\Sigma} = 0$ on $\Sigma$, and $h^{\partial M} (N,N) = 0$ along $\partial\Sigma$. Moreover, since $\Sigma$ meets $\partial M$ orthogonally, we have
\begin{align*}
h^{\partial\Sigma}(V,W) = h^{\partial M}(V,W)
\end{align*}
for all $V,W \in T(\partial\Sigma)$. This implies that 
\begin{align} \label{ine 3.2}
h^{\partial\Sigma} \geq 0 \quad \text{along } \partial\Sigma. 
\end{align}
On the other hand, the Gauss equation gives
\begin{align*}
Ric^{M}(N,N) = \frac{1}{2}\left(R^{M} - R^{\Sigma} + H^{2} - \vert h^{\Sigma} \vert^{2}\right).
\end{align*}
Since $\Sigma$ is minimal, $Ric^{M}(N,N) = 0$, and $h^{\Sigma} = 0$, we obtain $R^{M} = R^{\Sigma}$ on $\Sigma$. Therefore, using $Ric^{\Sigma} = \frac{R^{\Sigma}}{n}g_{\Sigma}$, the smallest eigenvalue $Ric^{\Sigma}_{-}$ of the Ricci curvature of $\Sigma$ satisfies
\begin{align*}
Ric^{\Sigma}_{-} = \frac{R^{\Sigma}}{n} = \frac{R^{M}}{n} \geq \frac{\inf_{M}R^{M}}{n}.
\end{align*}
Set $\lambda:= \frac{\inf_{M}R^{M}}{n(n-1)} > 0$. Then 
\begin{align*}
Ric^{\Sigma}_{-} \geq (n-1)\lambda.
\end{align*}
Taking $\Phi = 1$ and $\gamma = 0$, we have
\begin{align*}
\Phi Ric^{\Sigma}_{-} - \gamma\Delta_{\Sigma}\Phi = Ric^{\Sigma}_{-} \geq (n-1)\lambda\Phi
\end{align*}
in $\Sigma$, and
\begin{align*}
\frac{\partial\Phi}{\partial\nu} = 0
\end{align*}
along $\partial\Sigma$. Moreover, since $\Phi = 1$, it attains its minimum on $\partial\Sigma$. From (\ref{ine 3.2}), all the hypotheses of Theorem \ref{vol com thm} are satisfied. Hence,
\begin{align*}
{\rm Vol}(\Sigma) \leq \lambda^{-\frac{n}{2}}{\rm Vol}(\mathbb{S}^{n}_{+}).
\end{align*}
Equivalently,
\begin{align*}
\begin{aligned}
{\rm Vol}(\Sigma)^{\frac{2}{n}} &\leq \lambda^{-1}{\rm Vol}(\mathbb{S}^{n}_{+})^{\frac{2}{n}} \\
&= \frac{n(n-1)}{\inf_{M}R^{M}}{\rm Vol}(\mathbb{S}^{n}_{+})^{\frac{2}{n}},
\end{aligned}
\end{align*}
which proves (\ref{ine 3.1}). Finally, suppose that equality holds in (\ref{ine 3.1}). Then equality holds in (\ref{vol com ine}) of Theorem \ref{vol com thm}. By the rigidity statement in Theorem \ref{vol com thm}, $\Sigma$ is isometric to the round hemisphere of radius $\lambda^{-\frac{1}{2}}$. Therefore, up to scaling, $\Sigma$ is isometric to $\mathbb{S}^{n}_{+}$.

\end{proof}

The following proposition indicates that it is possible to construct a CMC foliation of a minimal free boundary hypersurface in an $(n+1)$-dimensional Riemannian manifold with certain properties. For further details, refer to \cite{AM, BC}.

Let smooth vector fields $Y$ in $M$ that coincides with $N$ on $\Sigma$ and $Y(p)$ is tangent to $\partial M$ for all $p \in \partial M$. We denote $\phi = \phi(x,t)$ the flow of $Y$.

\begin{proposition} [\cite{AM, BC, CR}] \label{foliation}
Let $M$ be an $(n+1)$-dimensional Riemannian manifold with boundary $\partial M$. Let $\Sigma$ be a compact connected two-sided, properly embedded, free boundary hypersurface. If $\Sigma$ is totally geodesic, $Ric^{M}(N,N) = 0$ on $\Sigma$, and $h^{\partial M}(N,N) = 0$ along $\partial\Sigma$, then there exists $\epsilon > 0$ and a smooth function $w: \Sigma \times (-\epsilon,\epsilon) \rightarrow \mathbb{R}$ such that, for every $t \in (-\epsilon,\epsilon)$, the set $\Sigma_{t} = \left\{\phi(x,w(x,t)) \vert x \in \Sigma \right\}$ is a free boundary hypersurface with constant mean curvature $H(t)$. Moreover,
\begin{align*}
w(x,0) = 0, \,\, \int_{\Sigma}\left( w(x,t) - t \right) dv = 0, \,\, {\rm and} \,\, \frac{\partial w}{\partial t}(x,0) = 1 \,\,.
\end{align*}
In particular, for some smaller $\epsilon$, the family $\{\Sigma_{t}\}_{t \in (-\epsilon,\epsilon)}$ forms a foliation of a neighborhood of $\Sigma$ in $M$.
\end{proposition}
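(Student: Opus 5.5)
The plan is the standard implicit function theorem construction of Ambrozio and Barros--Cruz, adapted to the free boundary setting.

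\textbf{Setting up the map.} Fix $\alpha\in(0,1)$ and consider
\begin{align*}
\mathcal{X}=\Big\{u\in C^{2,\alpha}(\Sigma):\ \int_\Sigma u\,dv=0\Big\},\qquad
\mathcal{Y}=\Big\{f\in C^{0,\alpha}(\Sigma):\ \int_\Sigma f\,dv=0\Big\}\times C^{1,\alpha}(\partial\Sigma).
\end{align*}
For $u\in\mathcal X$ and small $t$, let $\Sigma_{u+t}=\{\phi(x,u(x)+t)\}$. Because $Y$ is tangent to $\partial M$ along $\partial M$, we have $\partial\Sigma_{u+t}\subset\partial M$. Define
\begin{align*}
\Phi(t,u)=\Big(H_{u+t}-\frac{1}{|\Sigma|}\int_\Sigma H_{u+t}\,dv,\ \langle N_{u+t},\bar N\rangle\big|_{\partial\Sigma}\Big),
\end{align*}
where $H_{u+t}$ and $N_{u+t}$ are the mean curvature and unit normal of $\Sigma_{u+t}$, pulled back to $\Sigma$. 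A zero of $\Phi$ is exactly a CMC hypersurface meeting $\partial M$ orthogonally. Since $\Sigma$ is free boundary and minimal, $\Phi(0,0)=0$.

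\textbf{Linearization.} The derivative in $u$ at $(0,0)$ is given by the standard first variation of mean curvature and of the contact angle:
\begin{align*}
D_u\Phi(0,0)v=\Big(-Lv+\frac{1}{|\Sigma|}\int_\Sigma Lv,\ \frac{\partial v}{\partial\nu}-h^{\partial M}(N,N)v\Big),\qquad L=\Delta_\Sigma+Ric^M(N,N)+|h^\Sigma|^2,
\end{align*}
up to sign conventions. Under the hypotheses, $Ric^M(N,N)=0$, $h^\Sigma=0$ and $h^{\partial M}(N,N)=0$, so this becomes
\begin{align*}
v\mapsto\Big(-\Delta_\Sigma v+\frac{1}{|\Sigma|}\int_\Sigma\Delta_\Sigma v,\ \frac{\partial v}{\partial\nu}\Big).
\end{align*}
\textbf{Main step: showing this is an isomorphism $\mathcal X\to\mathcal Y$.} This is where the work lies.

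For injectivity, suppose $\Delta v=c$ constant and $\partial_\nu v=0$. Integrating gives $c|\Sigma|=\int_{\partial\Sigma}\partial_\nu v=0$, so $v$ is harmonic with Neumann data. Hence $v$ is constant, and the zero-mean condition forces $v=0$.

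For surjectivity, given $(f,g)$ with $\int f=0$, first choose a constant $c$ so that the compatibility condition $\int_\Sigma(-f-c)\,dv=\int_{\partial\Sigma}g$ holds. Then solve the Neumann problem $\Delta v=-f-c$, $\partial_\nu v=g$. Schauder theory for the Neumann problem gives $v\in C^{2,\alpha}$, and we normalize it to have mean zero. The averaging term absorbs the constant $c$, so $D_u\Phi(0,0)(v)=(f,g)$. The care needed here is in tracking the exact form of the boundary term and keeping the sign conventions consistent; the hypothesis $h^{\partial M}(N,N)=0$ is precisely what reduces the boundary operator to a pure Neumann condition.

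\textbf{Conclusion.} The implicit function theorem gives a $C^1$, in fact smooth, curve $t\mapsto u(t)\in\mathcal X$ with $u(0)=0$ and $\Phi(t,u(t))=0$. Set $w(x,t)=t+u(t)(x)$. Then $w(x,0)=0$ and $\int_\Sigma(w-t)\,dv=0$, and each $\Sigma_t$ is a free boundary CMC hypersurface with mean curvature $H(t)$.

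To see that $\partial_t w(x,0)=1$, differentiate $\Phi(t,u(t))=0$ at $t=0$. The $t$-derivative of $\Phi$ equals $D_u\Phi$ applied to the constant function $1$, which vanishes because $\Delta 1=0$ and $\partial_\nu 1=0$. Therefore $D_u\Phi(0,0)u'(0)=0$, and injectivity gives $u'(0)=0$, so $\partial_t w(x,0)=1$.

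Finally, since $\partial_t w>0$ near $t=0$ and $\phi$ is the flow of a vector field transverse to $\Sigma$, the map $(x,t)\mapsto\phi(x,w(x,t))$ is a local diffeomorphism onto a neighborhood of $\Sigma$. Compactness of $\Sigma$ lets us shrink $\epsilon$ so that this map is injective, and hence $\{\Sigma_t\}$ foliates a neighborhood of $\Sigma$ in $M$.
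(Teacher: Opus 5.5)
Your proposal is correct and is essentially the implicit function theorem argument of Ambrozio and Barros--Cruz (with Castro--Rosales for the general setting). The paper gives no proof of its own and defers to these references; your version has the same ingredients: the same Banach spaces, the same map built from the normalized mean curvature and the contact angle $\langle N_{u+t},\bar N\rangle$, invertibility of the linearization via the Neumann problem once $Ric^{M}(N,N)$, $h^{\Sigma}$ and $h^{\partial M}(N,N)$ vanish, and $\partial_t w(x,0)=1$ from injectivity. The one point to make explicit is that, since $1\notin\mathcal{X}$, the $t$-derivative comes from the linearization of $w\mapsto\Phi$ on all of $C^{2,\alpha}(\Sigma)$; its value on the constant $1$ involves only $Ric^{M}(N,N)+|h^{\Sigma}|^{2}$ and $h^{\partial M}(N,N)$, so it vanishes precisely by the hypotheses.
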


\section{Proof of Theorem \ref{main thm}}

In this section, we prove the rigidity of an $(n+1)$-dimensional Riemannian manifold with weakly convex boundary $\partial M$.

\begin{proof of main thm}
The rigidity of Proposition \ref{prop 3.1} implies that, up to scaling, $\Sigma$ is isometric to the round hemisphere $\mathbb{S}^{n}_{+}$. The same proposition also yields $h^{\Sigma} = 0$, $Ric^{M}(N,N) = 0$ on $\Sigma$, and $h^{\partial M}(N,N) = 0$ along $\partial\Sigma$. Hence, Proposition \ref{foliation} provides a free boundary CMC foliation $\{\Sigma_{t}\}_{t\in (-\epsilon,\epsilon)}$ of a neighborhood of $\Sigma$. Let $G: \Sigma \times (-\epsilon,\epsilon) \rightarrow M$ given by $G_{t}(x) = \phi(x,w(x,t))$, be a parametrization of this foliation. Denote by $N_{t}$ a unit normal vector field along $\Sigma_{t}$, and define the \textit{lapse function} by $\rho_{t} = \langle \partial_{t}G, N_{t} \rangle$. Since $\rho_{0} = 1$ and $\Sigma$ is compact, we may assume that $\rho_{t} > 0$ for every $t \in (-\epsilon,\epsilon)$. It is well known that
\begin{align} \label{eq 4.1}
\begin{cases}
H'(t) = -\Delta_{\Sigma_{t}}\rho_{t} - \left(Ric^{M}(N_{t},N_{t}) + \vert h^{\Sigma_{t}} \vert^{2}\right)\rho_{t} \quad &\text{on } \Sigma_{t} ,\\
\frac{\partial\rho_{t}}{\partial\nu_{t}} = h^{\partial M}(N_{t},N_{t})\rho_{t} \quad &\text {along } \partial\Sigma_{t},
\end{cases}
\end{align}
where $\nu_{t}$ denotes unit conormal of $\partial\Sigma_{t}$ in $\Sigma_{t}$ (see \cite{AM, BC}). Multiplying (\ref{eq 4.1}) by $\rho^{-1}_{t}$, we obtain
\begin{align} \label{ine 4.2}
\begin{aligned}
H'(t)\rho^{-1}_{t} &= -\rho^{-1}_{t}\Delta_{\Sigma_{t}}\rho_{t} - \left(Ric^{M}(N_{t},N_{t}) + \vert h^{\Sigma_{t}} \vert^{2}\right) \\
&\leq -\rho^{-1}_{t}\Delta_{\Sigma_{t}}\rho_{t},
\end{aligned}
\end{align}
where we used $Ric^{M} \geq 0$. Integrating (\ref{ine 4.2}) over $\Sigma_{t}$, we obtain
\begin{align} \label{eq 4.3}
\begin{aligned}
H'(t)\int_{\Sigma_{t}}\rho^{-1}_{t} dv &\leq - \int_{\Sigma_{t}} \rho^{-1}_{t}\Delta_{\Sigma_{t}}\rho_{t} \, dv \\
&= -\int_{\partial\Sigma_{t}}\frac{\partial\rho_{t}}{\partial\nu_{t}}\frac{1}{\rho_{t}} \, ds - \int_{\Sigma_{t}}\frac{\vert \nabla^{\Sigma_{t}}\rho_{t} \vert^{2}} {\rho^{2}_{t}} \, dv \\
&= -\int_{\partial\Sigma_{t}}h^{\partial M}(N_{t},N_{t}) \, ds - \int_{\Sigma_{t}}\frac{\vert \nabla^{\Sigma_{t}}\rho_{t} \vert^{2}}{\rho^{2}_{t}} \, dv \\
&\leq -\int_{\Sigma_{t}} \frac{\vert \nabla^{\Sigma_{t}}\rho_{t} \vert^{2}}{\rho^{2}_{t}} \, dv \leq 0,
\end{aligned}
\end{align}
where in the last inequality we have used the weak convexity of $\partial M$. Since $\rho_{t} > 0$, it follows from (\ref{eq 4.3}) that $H'(t) \leq 0$. Since $H(0) = 0$, we obtain
\begin{align} \label{ine 4.3}
H(t) \leq 0 \leq H(-t) \quad \text{for all } t \in [0,\epsilon).
\end{align}
On the other hand, since each $\Sigma_{t}$ is a free boundary hypersurface, the first variation formula of volume gives
\begin{align} \label{ine 4.4}
\frac{d}{dt}{\rm Vol}(\Sigma_{t}) = H(t)\int_{\Sigma_{t}}\rho_{t} \, dv.
\end{align}
Since $\rho_{t} > 0$, it follows from (\ref{ine 4.3}) and (\ref{ine 4.4}) that
\begin{align*}
{\rm Vol}(\Sigma_{t}) \leq {\rm Vol}(\Sigma) \quad \text{for all } t \in (-\epsilon,\epsilon).
\end{align*}
Since $\Sigma$ is volume-minimizing, we have ${\rm Vol}(\Sigma_{t}) = {\rm Vol}(\Sigma)$ for every $t \in (-\epsilon,\epsilon)$. It follows from (\ref{ine 4.4}), together with $\rho_{t} > 0$, that $H(t) = 0$ for all $t \in (-\epsilon,\epsilon)$. Consequently, $H'(t) = 0$. Returning to (\ref{eq 4.3}), we obtain
\begin{align*}
0 \leq -\int_{\partial\Sigma_{t}}h^{\partial M}(N_{t},N_{t}) \, ds - \int_{\Sigma_{t}}\frac{\vert \nabla^{\Sigma_{t}}\rho_{t} \vert^{2}}{\rho^{2}_{t}} \, dv \leq 0.
\end{align*}
Therefore, $h^{\partial M}(N_{t},N_{t}) = 0$ along $\partial\Sigma_{t}$, and $\nabla^{\Sigma_{t}}\rho_{t} = 0$ on $\Sigma_{t}$. Thus $\rho_{t}$ is constant on each $\Sigma_{t}$. Hence, (\ref{eq 4.1}) reduces to
\begin{align*}
 0 = -\left(Ric^{M}(N_{t},N_{t}) + \vert h^{\Sigma_{t}} \vert^{2}\right)\rho_{t}.
\end{align*}
Since $\rho_{t} > 0$, $Ric^{M} \geq 0$, and $\vert h^{\Sigma_{t}} \vert^{2} \geq 0$, we conclude that $Ric^{M}(N_{t},N_{t}) = 0$ and $h^{\Sigma_{t}} = 0$ on every $\Sigma_{t}$. We next show that the unit normal vector field $N_{t}$ is parallel along $\Sigma_{t}$. Let $(x_{1}, \cdots, x_{n})$ be local coordinates on $\Sigma$. Since $\Sigma_{t}$ is totally geodesic, $\nabla_{\partial_{x_{i}}G}N_{t} = 0$. Furthermore, since $\vert N_{t} \vert = 1$, 
\begin{align*}
\left\langle \nabla_{\partial_{t}G}N_{t}, N_{t} \right\rangle = 0,
\end{align*}
and hence $\nabla_{\partial_{t}G}N_{t}$ is tangent to $\Sigma_{t}$. Since $\rho_{t}$ is constant on $\Sigma_{t}$, we have
\begin{align*}
\begin{aligned}
0 = \partial_{x_{i}}\rho_{t} &= \left\langle \nabla_{\partial_{x_{i}}G}N_{t}, \partial_{t}G \right\rangle + \left\langle N_{t}, \nabla_{\partial_{x_{i}}G}\partial_{t}G \right\rangle \\
&= \left\langle N_{t}, \nabla_{\partial_{t}G} \partial_{x_{i}}G \right\rangle \\
&= - \left\langle \nabla_{\partial_{t}G}N_{t}, \partial_{x_{i}}G \right\rangle
\end{aligned}
\end{align*}
(see also \cite{AM, BC, CR}). Thus $N_{t}$ is a parallel vector field. It follows that the integral curves of $N_{t}$ are geodesic. Consider the exponential map $E(t,x) := {\rm exp}_{x}\left(tN(x)\right)$ for $x \in \Sigma$. Since $N_{t}$ is parallel, its flow is given by the exponential map $E$, which is an isometry for all $t \in (-\epsilon,\epsilon)$. Therefore, we conclude that, up to scaling, a neighborhood of $\Sigma$ in $M$ is isometric to $\mathbb{S}^{n}_{+} \times (-\epsilon,\epsilon)$. 
\end{proof of main thm}

The following corollary extends our local rigidity result to a global rigidity result.

\begin{corollary}
Let $M$ be a connected, complete $(n+1)$-dimensional Riemannian manifold $(n \geq 3)$ with weakly convex boundary $\partial M$. Assume that $Ric^{M} \geq 0$ and $\inf_{M}R^{M} > 0$. Let $\Sigma$ be a compact connected two-sided, properly embedded, free boundary hypersurface satisfying $Ric^{\Sigma} = \frac{R^{\Sigma}}{n}g_{\Sigma}$. Assume, in addition, that $\Sigma$ is volume-minimizing in its isotopy class. If equality holds in (\ref{ine vol}), then, up to scaling, the universal cover $\tilde{M}$ of $M$ is isometric to $\mathbb{S}^{n}_{+} \times \mathbb{R}$.
\end{corollary}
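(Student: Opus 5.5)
The plan is to globalize the local splitting of Theorem \ref{main thm} with a standard continuation argument in the style of Cai--Galloway and Ambrozio, carried out on the universal cover. Pass to the universal cover $\pi:\tilde{M}\to M$ with the lifted metric. It is complete, has weakly convex boundary $\partial\tilde M=\pi^{-1}(\partial M)$, and satisfies $Ric^{\tilde M}\ge 0$ and $\inf R^{\tilde M}=\inf R^M>0$.

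First I would redo the local argument of Theorem \ref{main thm} using only volume-minimization in the isotopy class. The proof of Theorem \ref{main thm} only compares $\Sigma$ with the leaves $\Sigma_t$ of the CMC foliation of Proposition \ref{foliation}, and these leaves are isotopic to $\Sigma$. The same computation therefore gives a neighborhood $\phi:\Sigma\times(-\epsilon,\epsilon)\to M$, $\phi(x,t)=\exp_x(tN(x))$, that is an isometry onto its image with the product metric. Moreover, every leaf $\Sigma_t$ is again a compact free boundary hypersurface isometric to $\mathbb S^n_+$ (after scaling), with the same volume, hence again volume-minimizing in the isotopy class and attaining equality in (\ref{ine vol}).

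Next comes the continuation. Let $T^*=\sup\{T>0:\ \Phi(x,t)=\exp_x(tN(x))$ is defined on $\Sigma\times[0,T)$, has image in $M$, and is a local isometry from the product metric$\}$. Suppose $T^*<\infty$. By completeness and the uniform product structure, $\Phi$ extends to $t=T^*$, and $\Sigma_{T^*}=\Phi(\Sigma,T^*)$ is a limit of totally geodesic free boundary leaves. So it is a smooth free boundary hypersurface, isotopic to $\Sigma$, with the same volume. Hence it is volume-minimizing in the isotopy class and satisfies the equality case, although it may only be immersed. Applying the local result at $\Sigma_{T^*}$ extends $\Phi$ beyond $T^*$, a contradiction. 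The same holds for negative $t$. The boundary behavior is automatic: $N$ is tangent to $\partial M$ along $\partial\Sigma$, and the product structure together with the local result keeps $\partial\Sigma\times\mathbb R$ inside $\partial M$. So $\Phi:\mathbb S^n_+\times\mathbb R\to M$ is a local isometry from a complete manifold with boundary, mapping boundary to boundary.

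Finally I would show that $\Phi$ is a Riemannian covering and conclude. A local isometry from a complete manifold onto a connected manifold, preserving the boundary, is surjective and a covering; the boundary is handled by the totally geodesic/free boundary structure, or by doubling. Since $\mathbb S^n_+\times\mathbb R$ is simply connected, it is the universal cover, so $\tilde M\cong\mathbb S^n_+\times\mathbb R$ up to scaling.

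The main obstacle is the continuation step. The issue is ensuring that $\Sigma_{T^*}$ is genuinely embedded, or that the local theorem applies to it, since the leaves can wrap around and self-intersect in $M$. To avoid this, I would run the whole argument in $\tilde M$ with a lift $\tilde\Sigma$. There I would use minimization in the isotopy class to argue that the leaves stay embedded, or else work with the map $\Phi$ directly as an immersion: the proof of Theorem \ref{main thm} is local in a tubular neighborhood and uses only the metric pulled back by $\Phi$.
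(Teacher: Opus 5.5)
\textbf{There is a genuine gap in your continuation step.} Your last step matches the paper's. The paper does not run a continuation argument itself: it takes the Riemannian covering $\Sigma\times\mathbb{R}\to M$ from the global part of \cite[Theorem 4.2]{CR} (constant density), uses Proposition~\ref{prop 3.1} only to identify $\Sigma$ with $\mathbb{S}^n_+$, and concludes by simple connectivity. Your attempt to rebuild that covering by continuation leaves its one substantive step unproved.

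Your $T^*$ only requires $\Phi$ to be a local isometry, which does not keep the slices $\Phi(\cdot,t)$ injective. You then assert that $\Sigma_{T^*}$ is isotopic to $\Sigma$, hence minimizing, ``although it may only be immersed.'' These two claims are incompatible. A non-embedded slice is not in the isotopy class, so the hypothesis gives no inequality ${\rm Vol}(\Sigma_{T^*,s})\geq {\rm Vol}(\Sigma_{T^*})$ for the CMC leaves on the unknown side $t>T^*$. That inequality is exactly what the proof of Theorem~\ref{main thm} uses to force $H\equiv 0$ and the splitting.

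Neither proposed fix supplies it:
\begin{itemize}
\item Passing to $\tilde M$ loses the hypothesis. A hypersurface isotopic to a lift $\tilde\Sigma$ in $\tilde M$ projects to one that is in general only immersed in $M$, so it is not an admissible competitor.
\item Pulling the metric back by $\Phi$ localizes the construction of the foliation, but not the volume comparison, which is a statement about competitors in $M$. That device works in \cite{BBN} because there one minimizes over maps in a homotopy class, not over embedded hypersurfaces in an isotopy class.
\end{itemize}

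\textbf{The missing idea is that embeddedness persists up to and including $T^*$.} One way to show it:
\begin{itemize}
\item Put injectivity of $\Phi(\cdot,s)$ for $s<T$ into the definition of $T^*$.
\item A $C^1$-limit of embedded slices cannot cross itself transversally. So if $\Phi(\cdot,T^*)$ fails to be injective, two sheets touch tangentially.
\item Both sheets are totally geodesic, so they coincide near the contact point. The set of such points is open and closed in $\Sigma$; closedness holds because $\Phi(\cdot,T^*)$ is an immersion.
\item If the normals $\partial_t\Phi$ of the two sheets agree, then $\Phi(z,t)=\exp_{\Phi(z,T^*)}\bigl(-(T^*-t)\,\partial_t\Phi\bigr)$ shows the sheets already coincide for $t$ slightly below $T^*$. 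This contradicts injectivity.
\item If the normals are opposite, $\Phi(\cdot,T^*)$ is a double cover of its image, with deck transformation a fixed-point-free isometric involution of $\Sigma$. This is impossible for $\mathbb{S}^n_+$: every isometry fixes the pole, which is the unique point at maximal distance from $\partial\mathbb{S}^n_+$.
\end{itemize}
Once $\Sigma_{T^*}$ is known to be embedded, it is isotopic to $\Sigma$ with the same volume. The local theorem then extends $\Phi$ beyond $T^*$, and the rest of your argument goes through.
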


\begin{proof}
Since $\Sigma$ is volume-minimizing in its isotopy class, it is locally volume-minimizing. By the Proposition \ref{prop 3.1} and the assumed equality of (\ref{ine vol}), up to scaling, $\Sigma$ is isometric to $\mathbb{S}^{n}_{+}$. Applying the global conclusion of Theorem 4.2 with constant density in \cite{CR}, we obtain a Riemannian covering $G: \Sigma \times \mathbb{R} \rightarrow M$. Since $\Sigma$ is a round hemisphere, $\Sigma \times \mathbb{R}$ is simply connected. Thus $G$ is the universal covering of $M$. Consequently, up to scaling, the universal cover of $M$ is isometric to $\mathbb{S}^{n}_{+} \times \mathbb{R}$ with the product metric.
\end{proof}


\begin{thebibliography}{00}

\bibitem{AM} L. Ambrozio,
{\it Rigidity of area-minimizing free boundary surfaces in mean convex three-manifolds}, J. Geom. Anal., {\bf 25} (2015), 1001-1017.

\bibitem{BAC} E. Barbosa and F. Conrado,
{\it Discs area-minimizing in mean convex Riemannian $n$-manifolds}, Proc. R. Soc. Edinb. A, {\bf 152} (2022), 1361-1382.

\bibitem{BCBS} A. Barros, C. Cruz, R. Batista, and P. Sousa,
{\it Rigidity in dimension four of area-minimising Einstein manifolds}, Math. Proc. Camb. Phil. Soc., {\bf 158} (2015), 355-363.

\bibitem{BC} A. Barros and C. Cruz,
{\it Free boundary hypersurfaces with non-positive Yamabe invariant in mean convex manifolds}, J. Geom. Anal., {\bf 30} (2020), 3542-3562.

\bibitem{BBN} H. Bray, S. Brendle, and A. Neves,
{\it Rigidity of area-minimizing two-spheres in three-manifolds}, Commun. Anal. Geom., {\bf 18} (2010), 821-830.

\bibitem{CG} M. Cai and G. Galloway,
{\it Rigidity of area-minimizing tori in 3-manifolds of nonnegative scalar curvature}, Commun. Anal. Geom., {\bf 8} (2000), 565-573.

\bibitem{CG2} M. Cai and G. Galloway,
{\it On the topology and area of higher-dimensional black holes}, Class. Quantum Gravity, {\bf 18} (2001), 2707-2718.

\bibitem{CG3} M. Cai, 
{\it Volume Minimizing Hypersurfaces in Manifolds of Nonnegative Scalar Curvature, in Minimal surfaces, geometric analysis and symplectic geometry} (Baltimore, MD, 1999) Adv. Stud. Pure Math. {\bf 34} (2002), no. 1, 1-7.

\bibitem{CR} K. Castro and C. Rosales,
{\it Free boundary stable hypersurfaces in manifolds with density and rigidity results}, J. Geom. Phys., {\bf 79} (2014), 14-28.

\bibitem{DE} H. Deng,
{\it A Bray-Brendle-Neves type inequality for a Riemannian manifold}, Acta Math. Sci., {\bf 41} (2021), 487-492.


\bibitem{FC} D. Fischer-Colbrie, R. Schoen, {\it The structure of complete stable minimal surfaces in 3-manifolds of non-negative scalar curvature}, Comm. Pure App. Math. {\bf 33} (1980) 199-211.

\bibitem{LI} J. Li,
{\it A spectral volume comparison for manifolds with weakly convex boundary}, Bull. Lond. Math. Soc., {\bf 58} (2026), e70473.

\bibitem{ME} A. Mendes,
{\it Rigidity of volume-minimising hypersurfaces in Riemannian 5-manifolds}, Math. Proc. Camb. Phil. Soc., {\bf 167} (2019), 345-353.

\bibitem{MM} M. Micallef and V. Moraru,
{\it Splitting of 3-manifolds and rigidity of area-minimizing surfaces}, Proc. Amer. Math. Soc., {\bf 143} (2015), 2865-2872.

\bibitem{MOT} V. Moraru,
{\it On Area comparison and rigidity involving the scalar curvature}, Ph. D. Thesis, University of Warwick (2013).

\bibitem{MO} V. Moraru,
{\it On area comparison and rigidity involving the scalar curvature}, J. Geom. Anal., {\bf 26} (2016), 294-312.

\bibitem{NU} I. Nunes,
{\it Rigidity of area-minimizing hyperbolic surfaces in three-manifolds}, J. Geom. Anal., {\bf 23} (2013), 1290-1302.

\bibitem{SC3} R. Schoen and S. T. Yau,
{\it Existence of incompressible minimal surfaces and the topology of three dimensional manifolds with non-negative scalar curvature}, Ann. Math., {\bf 110} (1979), 127-142.

\bibitem{ZH} J. Zhu,
{\it Rigidity of area-minimizing $2$-spheres in $n$-manifolds with positive scalar curvature}, Proc. Amer. Math. Soc. {\bf 148} (2020), 3479-3489.



\end{thebibliography}
\end{document}